\newtheorem{proposition}{Proposition}[section]
\newtheorem{definition}{Definition}[section]
\newtheorem{corollary}{Corollary}[section]
\newenvironment{proof}{{\bf Proof:} }{}
\begin{document}

\title{{\Large Generalized Poincar\'{e} Half-Planes}}
\author{{\normalsize R\"{u}stem Kaya} \\
%EndAName
\textit{Eski\c{s}ehir Osmangazi University,}\\
\textit{Department of Mathematics-Computer,\ }\\
\textit{26480 Eski\c{s}ehir, Turkey.}\\
rkaya@ogu.edu.tr}
\maketitle

\begin{abstract}
In this note, we give some generalisations of the classical Poincar\'{e}
upper half-plane, which is the most popular model of hyperbolic plane
geometry. For this, we replace the circular arcs by elliptical arcs with
center on the $x-$axis, and foci on the $x-$axis or on the lines
perpendicular to the $x-$axis at the center, in the upper half-plane. Thus,
we obtain a class of generalized upper half-planes with infinite number of
members.

Furthermore we show that every generalized Poincar\'{e} upper half-plane
geometry is a neutral geometry satisfying the hyperbolic axiom. That is, it
satisfies also all axioms of the Euclidean plane geometry except the
parallelism.

\textit{Key Words: Metric, Hyperbolic geometry, Hyperbolic distance, Poincar%
\'{e} half-plane, Absolute geometry, Non-Euclidean geometries}

\textit{Ams Subject classification }:\ 51F05, 51K05
\end{abstract}

\section{Introduction}

The concept of upper half-plane is used to mean the Cartesian half-plane
which consists of all points with positive ordinate. If half-lines which are
perpendicular to the $x-$axis, and half-circles with center on the $x-$axis
are defined as lines in the upper half-plane, one gets a model for the
hyperbolic plane. (For the other models of the hyperbolic plane geometries
see [1], [4], [7], [8].) In this model, the hyperbolic length of an
arbitrary curve $\gamma $ is defined by%
\begin{equation*}
\int_{\gamma }\frac{\sqrt{dx^{2}+dy^{2}}}{y}
\end{equation*}%
which reduces also a distance function and a metric known as Poincar\'{e}
metric. The Euclidean half-plane together with Poincar\'{e} metric is
generally known as the Poincar\'{e} half plane. (During the recent years
many metric models have also been developed see [2], [3], [5], [6].)

In this note we define and examine some generalisations of the Poincar\'{e}
Half-Plane.

\section{Basic Conceps and Definitions}

\begin{definition}
\textit{Let }$\mathcal{P}$ be the set of all Cartesian points on the upper
half plane, that is 
\begin{equation*}
\mathcal{P}\text{ }=\left\{ (x,y)\mid x,y\in \mathbb{R}\text{ , }y>0\right\}
,
\end{equation*}%
and let 
\begin{equation*}
\mathcal{L}_{p}\text{ }=\left\{ L_{p}\mid p\in \mathbb{R}\right\} \text{
such that }L_{p}\text{ }=\left\{ (p,y)\in \mathcal{P}\right\} ;
\end{equation*}
that is, $L_{p}$ represent the Euclidean half line with the equation $x=p,$ $%
y>0$.

Let 
\begin{equation*}
\mathcal{L}_{kac}\text{ }=\left\{ L_{kac}\mid a,c,k\in \mathbb{R}\text{ , }%
a>0,k>0,k\text{ constant}\right\}
\end{equation*}%
such that%
\begin{equation*}
L_{kac}\text{ }=\left\{ (x,y)\in \mathcal{P}\mid
(x-c)^{2}+k^{2}y^{2}=a^{2}\right\} .
\end{equation*}%
That is, $L_{kac}$ is an Euclidean half-ellipse with center $(c,0)$ on the $%
x-$axis. Where positive real number $k$ is a given constant. $a$ represents
the length of the semimajor axis or semiminor axis of the ellipse according
as $k>1$ or $0<k<1$ . $a$ is always mesured along the $x-$axis. If $b$ is
the length of the other \ axis than $a=bk$.

Now, define 
\begin{equation*}
\mathcal{L=L}_{p}\cup \mathcal{L}_{kac}\text{.}
\end{equation*}%
Elements of $\mathcal{L}$ are called \textbf{hyperbolic lines}, shortly $h-$%
\textbf{lines}. Now consider the system%
\begin{equation*}
\mathbb{H}_{k}=\left\{ \mathcal{P}\text{ ,}\mathcal{L}\right\}
\end{equation*}%
which is called a \textbf{generalized Poincar\'{e} half} \textbf{plane}.
Notice that, every positive real number $k$ determines a generalized Poincar%
\'{e} half plane. Thus, now we have a family of generalized Poincar\'{e}
half-planes with infinite number of members. If \ $k=1$ then $\mathbb{H}_{1}$
is the the classical Poincar\'{e} half plane.%

\begin{figure}[ht]
	\centering
	\includegraphics[width=1\textwidth]{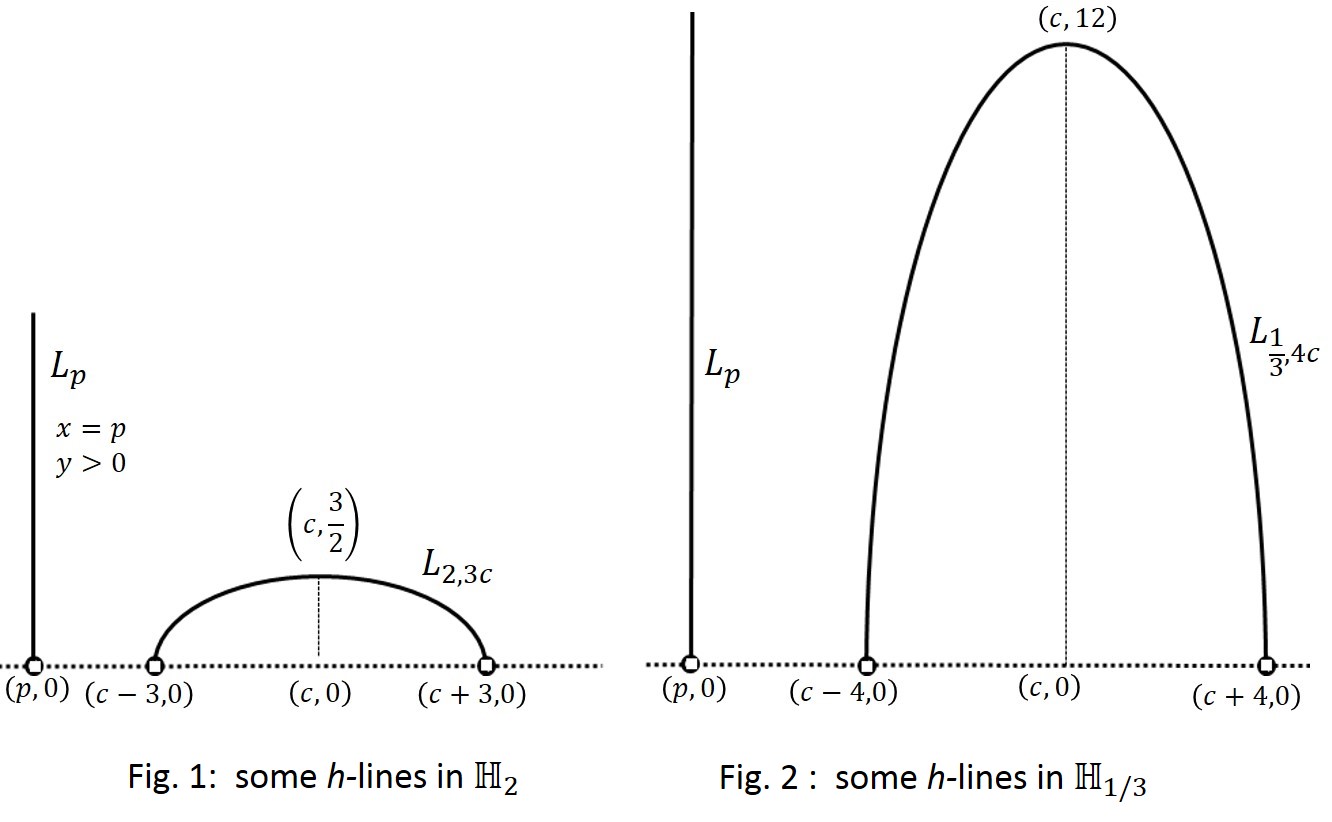}	
\end{figure}

Notice that if $k>1$ then $k$ is ratio of the length of semimajor axis to
the length of semiminor axis of the ellipse. In this case the $x-$axis is
the major axis.

if $\ 0<k<1$ then $k$ is ratio of the length of semiminor axis to the length
of semimajor axis of the ellipse. In this case the the major axis is
perpendicular to the $x-$axis at $(c,0)$.
\end{definition}

\begin{proposition}
There exist a unique $h-$line pasing through two distinct points of $%
\mathcal{P}$ in $\mathbb{H}_{k}$.
\end{proposition}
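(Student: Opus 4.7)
The plan is to fix two distinct points $P_1=(x_1,y_1)$ and $P_2=(x_2,y_2)$ of $\mathcal{P}$ and split into the two natural cases determined by the structure of $\mathcal{L}$.

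First I would handle the vertical case, $x_1=x_2=p$. Then both points lie on $L_p\in\mathcal{L}_p$, giving existence. For uniqueness I must rule out that any half-ellipse $L_{kac}$ contains both points: substitution gives $(p-c)^2+k^2y_1^2=(p-c)^2+k^2y_2^2=a^2$, forcing $y_1^2=y_2^2$, and since $y_1,y_2>0$ this contradicts $P_1\neq P_2$. Any other vertical half-line is immediately excluded since $L_p\cap L_{p'}=\emptyset$ when $p\neq p'$.

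Next I would treat the case $x_1\neq x_2$. Here no vertical $h$-line contains both points, so the candidate $h$-line must come from $\mathcal{L}_{kac}$. Requiring both points to satisfy $(x-c)^2+k^2y^2=a^2$ gives the system
\begin{equation*}
(x_1-c)^2+k^2y_1^2=a^2,\qquad (x_2-c)^2+k^2y_2^2=a^2.
\end{equation*}
Subtracting eliminates $a^2$ and yields a linear equation in $c$, namely $2c(x_1-x_2)=(x_1^2-x_2^2)+k^2(y_1^2-y_2^2)$, which, since $x_1\neq x_2$, solves uniquely for $c\in\mathbb{R}$. Substituting this $c$ back into either equation determines $a^2=(x_1-c)^2+k^2y_1^2$, and because $y_1>0$ this value is strictly positive, so a unique $a>0$ is obtained. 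Thus existence and uniqueness of $L_{kac}$ are established simultaneously.

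The only step that requires any thought is verifying that the candidate $L_{kac}$ produced in the second case really is an admissible $h$-line, i.e.\ that the resulting $a$ satisfies $a>0$; but this is automatic from $y_1>0$. The rest is straightforward bookkeeping: the two cases are mutually exclusive and together exhaust all possibilities, so combining them gives exactly one $h$-line through $P_1$ and $P_2$ in $\mathbb{H}_k$.
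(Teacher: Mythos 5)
Your proof is correct and follows essentially the same route as the paper: the same case split on $x_1=x_2$ versus $x_1\neq x_2$, and the same resolution of the system $(x_i-c)^2+k^2y_i^2=a^2$ for a unique $c$ and then $a$. You are in fact slightly more careful than the paper (explicitly ruling out an ellipse through two vertically aligned points, and noting $a>0$), and your formula $c=\bigl((x_1^2-x_2^2)+k^2(y_1^2-y_2^2)\bigr)/\bigl(2(x_1-x_2)\bigr)$ is the correct one, whereas the paper's displayed numerator $(x_1-x_2)^2$ appears to be a typo.
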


\begin{proof}
Let $P_{1}=(x_{1},y_{1})$ and $P_{2}=(x_{2},y_{2})\in \mathcal{P}$. If $%
x_{1}=x_{2}$ then clearly $P_{1}P_{2}$ is the $h-$line $L_{x_{1}}$ with
equation $x=x_{1},$ $y>0$ by definition of $L_{p}$. And obviously, there is
no $h-$line of type $L_{kac}$ passing through $P_{1}$ and $P_{2}$.

If $x_{1}\neq x_{2}$ and $P_{1},P_{2}\in L_{kac}$ then 
\begin{equation*}
\begin{array}{c}
(x_{1}-c)^{2}+k^{2}y_{1}^{2}=a^{2} \\ 
(x_{2}-c)^{2}+k^{2}y_{2}^{2}=a^{2}\text{.}%
\end{array}%
\end{equation*}%
Solving this system of equations for $a$ and $c$ one obtains 
\begin{equation*}
c=\frac{(x_{1}-x_{2})^{2}+k^{2}(y_{1}^{2}-y_{2}^{2})}{2(x_{1}-x_{2})}
\end{equation*}%
and%
\begin{equation*}
a=[(x_{1}-c)^{2}+k^{2}y_{1}^{2}]^{1/2}\text{.}
\end{equation*}%
Since $k$ is costant there exist a unique pair $a$ and $c$ and consequently one obtains a unique $h-$line $L_{kac}$. Obviously there is no $h-$%
line of type $L_{p}$ having on such a pair of points.
\end{proof}

\begin{corollary}
Two hyperbolic lines meet at most one point in $\mathbb{H}_{k}$.
\end{corollary}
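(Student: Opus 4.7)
The plan is to deduce the corollary as an immediate contrapositive consequence of Proposition 2.1. I would suppose, for contradiction, that two distinct $h$-lines $\ell_{1}$ and $\ell_{2}$ in $\mathbb{H}_{k}$ share two different points $P_{1},P_{2}\in\mathcal{P}$. Then each of $\ell_{1}$ and $\ell_{2}$ is an $h$-line passing through the pair $\{P_{1},P_{2}\}$, which contradicts the uniqueness asserted in Proposition 2.1; therefore any two distinct $h$-lines meet in at most one point.

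A pleasant feature of this approach is that no case analysis is required. Proposition 2.1 already bundles together all three possible pairings, namely two verticals in $\mathcal{L}_{p}$, one vertical and one half-ellipse, or two half-ellipses in $\mathcal{L}_{kac}$, so I do not have to reopen the algebra that pins down the parameters $c$ and $a$, nor to intersect a vertical line $x=p$ with an elliptical arc; those facts are packaged inside the previous result. If one wanted a more concrete record, a one-line verification in each case (the two vertical-line case is trivial, the vertical/ellipse case reduces to selecting the unique positive root for $y$ from $(p-c)^{2}+k^{2}y^{2}=a^{2}$, and the ellipse/ellipse case is exactly what was just proved) could be added, but it is not logically necessary.

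I do not foresee any genuine obstacle. The only point worth double-checking is that the hypothesis of Proposition 2.1 really applies, which amounts to verifying that the two common points are truly distinct elements of $\mathcal{P}$, and this is precisely the assumption that is being pushed to a contradiction. So the argument is a one-step invocation of the uniqueness part of Proposition 2.1, with no extra computation required.
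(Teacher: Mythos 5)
Your argument is correct and is precisely the intended one: the paper offers no written proof for this corollary, presenting it as an immediate consequence of the uniqueness part of Proposition 2.1, which is exactly the contrapositive you spell out. Nothing further is needed.
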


Notice that althought the $x-$axis is not in $\mathbb{H}_{k}$, we can use
the points on it in the definitions and calculations as follows:

\begin{definition}
Every pair of two $h-$lines which are Euclidean half-lines are defined as%
\textbf{\ parallel} $h-$\textbf{lines}. Also, two different $h-$lines are
called \textbf{parallel} iff their Euclidean extensions meet on the $x-$axis.

Thus, $L_{p}//L_{q}$ for all $p\neq q$,

$L_{p}//L_{kac}$ $\Longleftrightarrow L_{p}\cap L_{kac}\in \left\{
(c-a,0),(c+a,0)\right\} $

$L_{kac}//L_{ka^{\prime }c^{\prime }}$ $\Longleftrightarrow L_{kac}\cap
L_{ka^{\prime }c^{\prime }}\in \left\{ (c-a,0),(c+a,0)\right\} $.
\end{definition}

\begin{proposition}
$\mathbb{H}_{k}$ satisfies the hyperbolic property, that is each $h-$line $L$
and each point $P\notin L$ there exist exactly two hyperbolic lines through $%
P$ and parallel to $L$.
\end{proposition}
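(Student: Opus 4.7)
The plan is to reduce the statement to a single existence/uniqueness lemma: for any $P\in\mathcal{P}$ and any boundary point $Q$ on the $x$-axis, there is exactly one $h$-line through $P$ whose Euclidean closure meets the $x$-axis at $Q$. Once this is available, the hyperbolic property follows by counting, since by definition a parallel of $L$ through $P$ must share exactly one $x$-axis endpoint with $L$ (two vertical lines being treated as sharing the point at infinity), and every $h$-line has exactly two such endpoints.

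To prove the lemma I would argue as follows. Let $P=(x_{0},y_{0})$ and $Q=(q,0)$. If $q=x_{0}$, the vertical half-line $L_{x_{0}}$ is the unique choice: no $L_{kac}$ can satisfy $c\pm a=x_{0}$ and $(x_{0}-c)^{2}+k^{2}y_{0}^{2}=a^{2}$ simultaneously, because the two together force $k^{2}y_{0}^{2}=0$, contradicting $y_{0}>0$. If $q\neq x_{0}$, substitute $c-a=q$ (or $c+a=q$) into the ellipse equation and solve for $a$; a short sign analysis shows that $c-a=q$ produces $a>0$ iff $x_{0}>q$, while $c+a=q$ produces $a>0$ iff $x_{0}<q$. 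Hence exactly one $L_{kac}$ through $P$ ends at $Q$, and no vertical line does.

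With the lemma in hand, the two cases of the proposition are immediate. If $L=L_{p}$ and $P=(x_{0},y_{0})\notin L$, then $x_{0}\neq p$; the vertical line $L_{x_{0}}$ is parallel to $L_{p}$ by the first clause of the parallelism definition, and the unique ellipse through $P$ ending at $(p,0)$ is parallel by the second clause. They are distinct, and any additional parallel would be either a second vertical line through $P$ (impossible) or a second ellipse through $P$ ending at $(p,0)$ (excluded by the lemma). If $L=L_{kac}$ with endpoints $(c-a,0)$ and $(c+a,0)$, the lemma gives a unique $h$-line through $P$ for each endpoint; these are distinct, neither equals $L$ (since $P\notin L$), and they exhaust the parallels, because by definition any parallel must share one of the two endpoints of $L$.

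The main obstacle in the whole argument is the sign bookkeeping inside the lemma, namely deciding which branch $c\pm a=q$ is valid given the relative position of $P$ and $Q$; everything else is a direct unpacking of the definitions.
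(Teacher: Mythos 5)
Your proposal is correct, and it in fact supplies a complete argument where the paper gives only the one-line assertion that the ``proof can be easily given using the definitions and Proposition 1.'' Your reduction to the lemma that each point $P\in\mathcal{P}$ and each boundary point $(q,0)$ determine a unique $h$-line (proved by the same elimination for $a$ and $c$ used in the paper's Proposition~2.1, here yielding $a=\bigl((x_{0}-q)^{2}+k^{2}y_{0}^{2}\bigr)/2(x_{0}-q)$ on the branch $c-a=q$ and the sign condition you describe) is exactly the missing content, and your case analysis over the two ideal endpoints of $L$, together with the observation that two vertical lines through $P$ cannot both exist, correctly yields the count of exactly two parallels.
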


\begin{proof}
Proof can be easily given using the definitions and proposition 2.1.
\end{proof}

If one defines that \textit{two hyperbolic lines are parallel when they are
disjoint}, then, clearly, there exist infinitely many different hyperbolic
lines through $P$ that are parallel to $L$.

\section{Further Properties of $\mathbb{H}_{k}$}

\bigskip As it is well known an\textit{\ incidence geoemetry }is geometry $I$%
, consist of a set $\mathcal{P}$, whose elements are called points, together
with a collection $\mathcal{L}$ of non-empty subsets of $\mathcal{P}$,
called lines, such that:

A1) Every two distinct points in $\mathcal{P}$ lies on a unique line,

A2) There exist three points in $\mathcal{P}$, which do not lie all on a
line.

An incidence geometry is a \textit{metric geometry} if

A3) There exists a distance function 
\begin{equation*}
d:\mathcal{P\times P\rightarrow }\mathbb{R}\text{ }\ni \text{ for all }%
P,Q\in \mathcal{P}\text{ such that }
\end{equation*}%
i) $d(P,Q)\geq 0$; ii) $d(P,Q)=0$ iff $P=Q$ and iii) $d(P,Q)=d(Q,P)$; and

A4) There exists a one-to-one, onto function $f:l\rightarrow $ $\mathbb{R},$ 
$\forall l\in \mathcal{L}$ such that $\left\vert f(P)-f(Q)\right\vert =d(P,Q)
$ for each pair of points $P$ and $Q$ on $l$ (\textit{Ruler postulate}.)

Clearly, every $\mathbb{H}_{k}$ has an infinite number of points and lines
and satisfies axioms of the incidence geometry.

Now the question is that \textit{whether the every }$\mathbb{H}_{k}$\textit{%
\ is a metric geometry or not}. If it is a metric geometry what are its
distance function and its ruler $f$.

It is known that if $P=(x_{1},y_{1})$ and $Q=(x_{2},y_{2})$ are points in
the Poincar\'{e} plane, the distance function is given by 
\begin{equation*}
d(P,Q)=\left\{ 
\begin{array}{l}
\left\vert Ln\left( \dfrac{x_{1}-c+r}{y_{1}}\text{ }/\text{ }\dfrac{%
x_{2}-c+r}{y_{2}}\right) \right\vert \text{ if }x_{1}\neq x_{2} \\ 
\\ 
\left\vert L{n}\left( {y_{2}}/{y_{1}}\right) \right\vert \text{ \ \ \ \
\ \ \ \ \ \ \ \ \ \ \ \ \ \ \ \ \ \ \ \ \ \ \ \ if }x_{1}=x_{2}\text{.}%
\end{array}%
\right\} 
\end{equation*}%
and the ruler $f$ is given by%
\begin{equation*}
f(x,y)=\left\{ 
\begin{array}{l}
\left\vert L{n}(\dfrac{x-c+r}{y})\right\vert \text{ if }(x,y)\in \mathcal{C}
\\ 
\left\vert L{n}\text{ }y\right\vert \text{ \ \ \ \ \ \ \ \ \ \ \ \ \ \ if }%
(x,y)\in L_{p}\text{.}%
\end{array}%
\right\} 
\end{equation*}%
where $\mathcal{C}$ stands for the semi-circle 
\begin{equation*}
(x-c)^{2}+y^{2}=r^{2},\text{ }y>0\text{.}
\end{equation*}%
Now, we will use the above $d$ and $f$ to give a reasonable ruler and a
distance function for $\mathbb{H}_{k}$. For this, consider the
transformation 
\begin{equation*}
g:\mathcal{P}\text{ }\rightarrow \mathcal{P}\text{ }\ni \text{ }g(x,y)=(x,ky)
\end{equation*}%
which maps $L_{kac}$ to the above semicircle $\mathcal{C}$ and the line $%
L_{p}$ to itself one-to-one onto (see Fig.3). 

\begin{figure}[ht]
	\centering
		\includegraphics[width=1\textwidth]{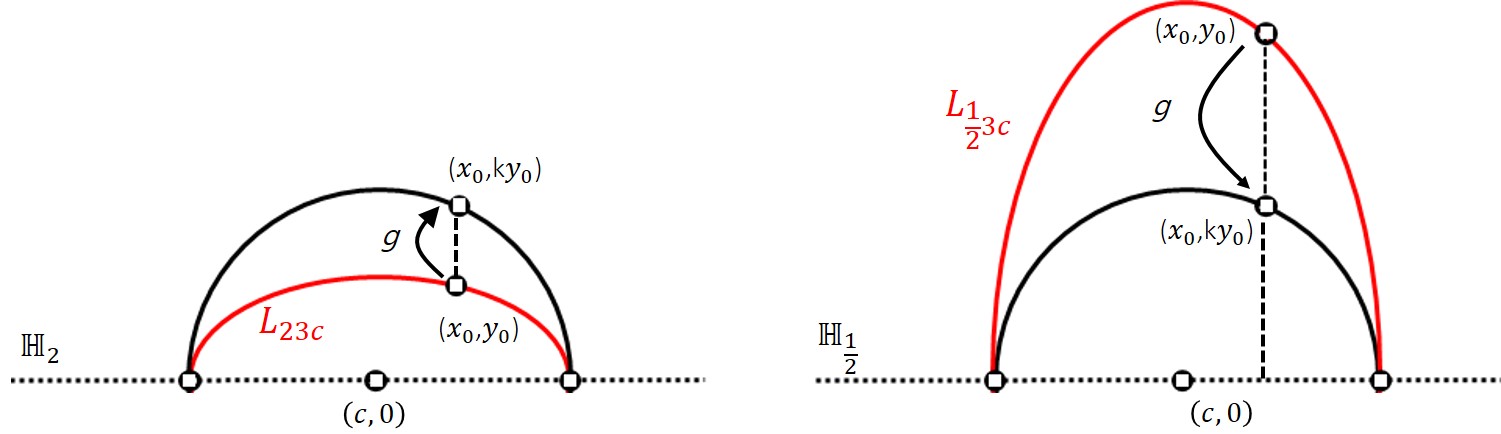}
	\text{ Fig.3: $ g:(x,y) \rightarrow (x,ky) $ }
\end{figure}

Let us define distance function for $P_{1}=(x_{1},y_{1})$ and $%
Q=(x_{2},y_{2})$ in $\mathbb{H}_{k}$ as 
\begin{equation*}
\begin{array}{ll}
d_{k}(P,Q) & :=d(g(P),g(Q))=d((x_{1},ky_{1}),(x_{2},ky_{2})) \\ 
&  \\ 
& =\left\{ 
\begin{array}{l}
\left\vert L{n}\left( \dfrac{(x_{1}-c+a)}{ky_{1}}\text{ }/\text{ }(\dfrac{%
(x_{2}-c+a)}{ky_{2}}\right) \right\vert \text{ \ \ \ \ if \ \ \ \ }x_{1}\neq
x_{2} \\ 
\left\vert L{n}\text{ }\left( ky_{2}\text{ }/\text{ }ky_{1}\right)
\right\vert \text{ \ \ \ \ \ \ \ \ \ \ \ \ \ \ \ \ \ \ \ \ \ \ \ \ \ \ \ \ \
\ \ if \ \ \ \ \ }x_{1}=x_{2}%
\end{array}%
\right.  \\ 
&  \\ 
& =\left\{ 
\begin{array}{l}
\left\vert L{n}\left( \dfrac{(x_{1}-c+a)y_{2}}{(x_{2}-c+a)y_{1}}\text{ }%
\right) \right\vert \text{ \ \ \ \ \ \ if }x_{1}\neq x_{2} \\ 
\left\vert L{n}\text{ }\left( y_{2}\text{ }/\text{ }y_{1}\right)
\right\vert \text{ \ \ \ \ \ \ \ \ \ \ \ \ \ \  \ \ \ \ \ if \ }x_{1}=x_{2}%
\end{array}%
\right.  \\ 
& =d(P,Q)\text{.}%
\end{array}%
\end{equation*}%
And define the ruler $f_{k}$ as $f_{k}=f\circ g$, that is 
\begin{equation*}
\begin{array}{ll}
f_{k}(x,y) & =(f\circ g)(x,y)=f(g(x,y))=f(x,ky) \\ 
&  \\ 
& =\left\{ 
\begin{array}{l}
L{n}\dfrac{(x-c+a)}{ky}\text{ \ \ \ \ \ \ \ \ if \ }(x,y)\in L_{kac}
\\ 
L{n}(ky)\text{ \ \ \ \ \ \ \ \ \ \ \ \ \ \ \ \ \ \ if }(x,y)\in L_{p}\text{.%
}%
\end{array}%
\right. 
\end{array}%
\end{equation*}

\begin{proposition}
Every $(\mathbb{H}_{k},d,f_{k})$ is a metric geometry.
\end{proposition}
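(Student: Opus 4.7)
The plan is to transport the metric-geometry structure from the classical Poincaré half-plane $\mathbb{H}_{1}$ to $\mathbb{H}_{k}$ via the bijection $g(x,y)=(x,ky)$, so essentially no genuinely new analytic work is required; the whole point is that $d_{k}$ and $f_{k}$ were \emph{defined} so that this transport succeeds.

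First I would verify that $g:\mathcal{P}\to\mathcal{P}$ is a bijection sending $\mathcal{L}$ to the line set of $\mathbb{H}_{1}$. A vertical line $L_{p}$ is mapped to itself. For an ellipse $L_{kac}$, substituting $y'=ky$ in $(x-c)^{2}+k^{2}y^{2}=a^{2}$ yields $(x-c)^{2}+(y')^{2}=a^{2}$ with $y'>0$, i.e.\ the semicircle $\mathcal{C}$ of radius $a$ centered at $(c,0)$. Thus $g$ is a bijection of point sets that carries $h$-lines of $\mathbb{H}_{k}$ to $h$-lines of $\mathbb{H}_{1}$ in a one-to-one correspondence; in particular, for each $l\in\mathcal{L}$, $g|_{l}:l\to g(l)$ is a bijection.

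Next I would check axiom A3 for $d_{k}$. Since $d_{k}(P,Q)=d(g(P),g(Q))$ and $d$ is the classical Poincaré distance, positivity, symmetry, and the $d_{k}(P,Q)=0\iff P=Q$ property follow immediately from the corresponding properties of $d$, using that $g$ is a bijection (so $g(P)=g(Q)$ iff $P=Q$). No computation beyond invoking the known facts about $d$ on $\mathbb{H}_{1}$ is needed.

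Finally I would check the ruler postulate A4 for $f_{k}=f\circ g$. Fix $l\in\mathcal{L}$. Then $g(l)$ is an $h$-line of $\mathbb{H}_{1}$, and $f:g(l)\to\mathbb{R}$ is a bijection by the known fact that $(\mathbb{H}_{1},d,f)$ is a metric geometry; composing with the bijection $g|_{l}$ gives the bijection $f_{k}|_{l}:l\to\mathbb{R}$. The ruler identity is then immediate:
\begin{equation*}
|f_{k}(P)-f_{k}(Q)|=|f(g(P))-f(g(Q))|=d(g(P),g(Q))=d_{k}(P,Q).
\end{equation*}
Combined with the incidence axioms A1, A2 already observed for $\mathbb{H}_{k}$ (A1 is Proposition 2.1, and A2 holds because, e.g., $(0,1),(1,1),(2,1)$ are not $h$-collinear), this establishes that $(\mathbb{H}_{k},d_{k},f_{k})$ is a metric geometry. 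The only step that could be called an obstacle is confirming that $g$ bijects $\mathcal{L}$ with the classical $h$-line set, and that is a short substitution; everything else is transport of structure.
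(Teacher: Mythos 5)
Your proof is correct, but it takes a genuinely different route from the paper's. You treat $g(x,y)=(x,ky)$ as an isomorphism of incidence structures and simply transport the known metric-geometry structure of $\mathbb{H}_{1}$ back to $\mathbb{H}_{k}$: once you observe that $g$ carries $L_{p}$ to $L_{p}$ and $L_{kac}$ to the semicircle $\mathcal{C}$, the bijectivity of $f_{k}|_{l}$ and the ruler identity are formal consequences of the corresponding classical facts, as is axiom A3 for $d_{k}=d\circ(g\times g)$. The paper instead argues by direct computation: it verifies that $f_{k}$ is one-to-one and onto on $L_{kac}$ by explicitly solving $f_{k}(x,y)=t$, obtaining the parametrization $x=c+a\tanh t$, $y=a/(k\cosh t)$ (and $(p,e^{t}/k)$ on $L_{p}$), and then checks $\left\vert f_{k}(P)-f_{k}(Q)\right\vert=d(P,Q)$ by expanding the logarithms; it also notes the identity $d_{k}=d$ by simplifying the pulled-back distance formula. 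Your approach is shorter, more conceptual, and makes clear that nothing about the specific formulas matters beyond $g$ being a line-preserving bijection; the paper's approach yields the explicit hyperbolic parametrization of the elliptical $h$-lines as a by-product, which is of independent use. One small point worth making explicit in your write-up: the proposition as stated uses $d$ rather than $d_{k}$, so to match it literally you should either invoke the paper's observation that the two coincide or note that your argument establishes the result for $d_{k}$, which is the distance actually defined on $\mathbb{H}_{k}$.
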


\begin{proof}
Since $d_{k}=d$, the axioms i), ii) and iii) are satisfied.

To show that 
\begin{equation*}
f_{k}:L_{kac}\rightarrow \mathbb{R}\ni \text{ }f_{k}(x,y)=\text{ }L{n}%
\dfrac{(x-c+a)}{ky}
\end{equation*}%
is one-to-one onto one must show that for every $t\in \mathbb{R}$ there is
only one pair of $(x,y)$ which satisfies 
\begin{equation*}
(x-c)^{2}+k^{2}y^{2}=a^{2},\text{ }y>0\text{ for }f(x,y)=t\text{.}
\end{equation*}%
If $f(x,y)=t$ then 
\begin{equation*}
f(x,y)=L{n}\dfrac{(x-c+a)}{ky}=t\Rightarrow \dfrac{(x-c+a)}{ky}=e^{t}\text{.%
}
\end{equation*}%
Thus, 
\begin{equation*}
\begin{array}{ll}
e^{-t} & =\dfrac{ky}{x-c+a}=\dfrac{ky}{x-c+a}\text{ }.\text{ }\dfrac{x-c-a}{%
x-c-a}=\dfrac{ky(x-c-a)}{(x-c)^{2}-a^{2}} \\ 
&  \\ 
& =\dfrac{ky(x-c-a)}{-k^{2}y^{2}}=\dfrac{x-c-a}{-ky}%
\end{array}%
\end{equation*}%
and 
\begin{equation*}
e^{t}+e^{-t}=\dfrac{x-c+a}{ky}-\dfrac{x-c-a}{ky}=\dfrac{2a}{ky}\Rightarrow y=%
\frac{a}{k\cosh t},
\end{equation*}%
and 
\begin{equation*}
e^{t}-e^{-t}=\dfrac{x-c+a}{ky}+\dfrac{x-c-a}{ky}=\dfrac{2(x-c)}{ky}\text{.}
\end{equation*}%
Thus%
\begin{equation*}
\tanh t=\dfrac{2(x-c)}{ky}.\dfrac{ky}{2a}=\dfrac{x-c}{a}\Rightarrow
x=c+a\tanh t\text{.}
\end{equation*}%
That is, the only possible solution to $f_{k}(x,y)=t$ is $x=c+a\tanh t$ and $%
y=a$ $/$ $k\cosh t$.

Similarly to show that 
\begin{equation*}
f_{k}:L_{p}\rightarrow \mathbb{R}\ni \text{ }f_{k}(x,y)=\text{ }L{n}(ky)
\end{equation*}%
is one-to-one onto, let $t\in \mathbb{R}\ni $ $f_{k}(x,y)=t$. Then 
\begin{equation*}
L{n}(ky)=t\text{ }\Rightarrow ky=e^{t}\Rightarrow y=e^{t}/k
\end{equation*}%
and%
\begin{equation*}
x\in L_{p}\Rightarrow x=p\text{.}
\end{equation*}%
Consequently only solution is $(p,e^{t}/k)$ and $f_{k}$ is 1-1 onto.

Finaly, if $x_{1}\neq x_{2}$ then $P,Q\in L_{kac}$ and 
\begin{equation*}
\begin{array}{ll}
\left\vert f_{k}(P)-f_{k}(Q)\right\vert & =\left\vert L{n}\dfrac{x_{1}-c+a}{%
ky_{1}}-L{n}\dfrac{x_{2}-c+a}{ky_{2}}\right\vert \\ 
& =\left\vert L{n}\left( \dfrac{x_{1}-c+a}{y_{1}}\text{ }/\text{ }\dfrac{%
x_{2}-c+a}{y_{2}}\right) \right\vert =d(P,Q)%
\end{array}%
\end{equation*}%
Thus $f_{k}$ is a ruler for $L_{kac}$.

If if $x_{1}=x_{2}$ then $P,Q\in L_{p}$ and 
\begin{equation*}
\begin{array}{ll}
\left\vert f_{k}(P)-f_{k}(Q)\right\vert & =\left\vert
L{n}(ky_{1})-L{n}(ky_{2})\right\vert \\ 
& =\left\vert L{n}\dfrac{ky_{1}}{ky_{2}}\right\vert =\left\vert L{n}\dfrac{%
y_{1}}{y_{2}}\right\vert \\ 
& =d(P,Q)%
\end{array}%
\end{equation*}%
and $f_{k}$ is a ruler for $L_{p}$.
\end{proof}

Every distance on an incidence geometry doesn't give a metric geometry. The
ruler postulate is very strong condition to place an incidence geometry,
which allows us to investigate further properties. If a metric geometry
satisfies \textit{the plane separation axiom} (PSA) below, then it is called 
\textit{Pasch Geometry}.

\textbf{PSA.} For every line $l$ in $\mathcal{L}$, there are two subsets $%
H_{1}$ and $H_{2}$ of $\mathcal{P}$ (called half planes determined by $l$)
such that

i) $H_{1}\cup $ $H_{2}=\mathcal{P}-l$ ($\mathcal{P}$ with $l$ removed)

ii) $H_{1}$ and $H_{2}$ are disjoint and each is convex,

iii) If $A\in H_{1}$ and $B\in $ $H_{2}$, then $AB\cap l\neq \varnothing $.

\begin{proposition}
Every $(\mathbb{H}_{k},d,f_{k})$ satisfies plane separation axiom.
\end{proposition}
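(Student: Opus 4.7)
The essential idea is that the map $g:\mathcal{P}\to\mathcal{P}$ with $g(x,y)=(x,ky)$, introduced just above, is a bijection carrying each $h$-line of $\mathbb{H}_{k}$ to an $h$-line of the classical Poincar\'e half-plane $\mathbb{H}_{1}$: vertical lines $L_{p}$ go to themselves, and the elliptical $h$-lines $L_{kac}$ go to the semicircles $(x-c)^{2}+y^{2}=a^{2}$, $y>0$. Moreover, by the very definitions $d_{k}(P,Q)=d(g(P),g(Q))$ and $f_{k}=f\circ g$, so $g$ is a bijective isometry of metric geometries which carries rulers to rulers. Since the classical Poincar\'e half-plane is known to satisfy PSA, the natural strategy is to transfer the half-planes back through $g$.

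Given a fixed $h$-line $L$ of $\mathbb{H}_{k}$, I would set $L'=g(L)$ and let $H_{1}',H_{2}'$ be the two half-planes of $\mathbb{H}_{1}$ determined by $L'$. Define $H_{i}:=g^{-1}(H_{i}')$ for $i=1,2$. Condition (i) of PSA is then immediate, because $g$ is a bijection of $\mathcal{P}$ taking $L$ to $L'$, so that $H_{1}\cup H_{2}=g^{-1}(H_{1}'\cup H_{2}')=g^{-1}(\mathcal{P}-L')=\mathcal{P}-L$; disjointness of $H_{1}$ and $H_{2}$ in (ii) is inherited for the same reason.

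The remaining parts (convexity of the $H_{i}$ and the crossing condition (iii)) are both stated in terms of segments, so the crux is to show that $g$ preserves betweenness. For this I would invoke the previous proposition: the ruler axiom characterizes betweenness on $L$ by the identity $|f_{k}(P)-f_{k}(Q)|+|f_{k}(Q)-f_{k}(R)|=|f_{k}(P)-f_{k}(R)|$, and since $f_{k}=f\circ g$ the analogous identity holds on $L'$ with $f$. Hence $Q$ is between $P$ and $R$ on $L$ iff $g(Q)$ is between $g(P)$ and $g(R)$ on $L'$, and $g$ therefore maps segments to segments. Convexity of each $H_{i}$ and the crossing property then follow at once from the corresponding facts for $H_{i}'$ in $\mathbb{H}_{1}$.

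The only real obstacle is to make the argument self-contained rather than treating PSA for $\mathbb{H}_{1}$ as a black box, and for this I would describe the $H_{i}$ explicitly: for a vertical line $L_{p}$ one takes $\{(x,y)\in\mathcal{P}:x<p\}$ and $\{(x,y)\in\mathcal{P}:x>p\}$; for an elliptical line $L_{kac}$ one takes the interior $\{(x,y)\in\mathcal{P}:(x-c)^{2}+k^{2}y^{2}<a^{2}\}$ and the exterior $\{(x,y)\in\mathcal{P}:(x-c)^{2}+k^{2}y^{2}>a^{2}\}$ of the ellipse within the upper half-plane. Using the explicit form of $g$, these are exactly the $g^{-1}$-images of the standard half-planes of $\mathbb{H}_{1}$, so that PSA for $\mathbb{H}_{k}$ reduces to PSA for the classical Poincar\'e model.
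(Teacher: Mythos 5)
Your proposal is correct, and it reaches the same explicit half-planes as the paper, but by a genuinely different route. The paper simply writes down the two sets (for $l=L_{p}$ the regions $x<p$ and $x>p$; for $l=L_{kac}$ the interior and exterior of the ellipse in $\mathcal{P}$), asserts that (i) and (iii) are easy, and disposes of convexity with the one-line remark that two $h$-lines meet in at most one point --- a step that is really only a sketch, since that incidence fact alone does not immediately yield convexity of the regions. You instead exploit the map $g(x,y)=(x,ky)$ as a collineation of $\mathbb{H}_{k}$ onto the classical model $\mathbb{H}_{1}$ which, because $d_{k}(P,Q)=d(g(P),g(Q))$ and $f_{k}=f\circ g$, preserves distance, rulers, and hence betweenness and segments; PSA for $\mathbb{H}_{k}$ is then the pullback of PSA for $\mathbb{H}_{1}$. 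This buys you a cleaner and more rigorous argument (convexity and the crossing condition are genuinely inherited rather than asserted), at the cost of quoting PSA for the Poincar\'{e} half-plane as a known theorem --- which is legitimate, since it is proved in the paper's own reference [7], and you correctly note that the pulled-back half-planes coincide with the explicit ones the paper uses. The same transfer principle would in fact streamline the later propositions on protractor and neutral geometry as well.
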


\begin{proof}
Let $l$ be a $h-$line in $\mathbb{H}_{k}$. Let $H_{1}$ and $H_{2}$ be the
half planes determined by $l$ such that 
\begin{equation*}
H_{1}=\left\{ (x,y)\in \mathcal{P}\text{ }\mid x>p\text{ }\right\}
\end{equation*}%
\begin{equation*}
H_{2}=\left\{ (x,y)\in \mathcal{P}\text{ }\mid x<p\text{ }\right\} \text{ \
if \ \ }l=L_{p}
\end{equation*}%
and 
\begin{equation*}
H_{1}=\left\{ (x,y)\in \mathcal{P}\text{ }\mid (x-c)^{2}+k^{2}y^{2}>a^{2}%
\text{ }\right\}
\end{equation*}%
\begin{equation*}
H_{2}=\left\{ (x,y)\in \mathcal{P}\text{ }\mid
(x-c)^{2}+k^{2}y^{2}<a^{2}\right\} \text{ \ if \ \ }l=L_{kac}\text{.}
\end{equation*}%
It can be easily seen that $H_{1}$ and $H_{2}$ are disjoint and i) and iii)
are satisfied. Furthermore convexity is a result of the fact that two $h-$%
lines meet at most one point in $\mathbb{H}_{k}$ (see Fig 4).

\clearpage

\begin{figure}[ht]
	\centering
		\includegraphics[width=1\textwidth]{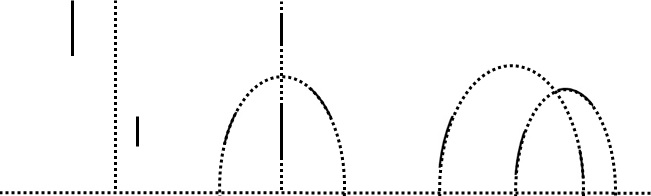}
	\text{ Fig.4 }
\end{figure}

It is very well known that a metric geometry satisfies PSA iff it satisfies
Pasch Axiom: \textit{A line which intersects one side of a triangle must
intersect one of the other two sides}.
\end{proof}

\textit{Is }$H_{k}$\textit{\ a protructor geometry?}

Cleraly, one can use Euclidean angle measure in a generalized Poincar\'{e}
plane since it is a subset of the Euclidean plane and since its lines are
defined in terms of Euclidean lines and ellipses. Here, the basic idea is to
replace the elliptical rays that make up the angle by Euclidean rays that
are tangents to the given elliptical rays. Thus without going into details
we can give the following :

\begin{proposition}
Every $(\mathbb{H}_{k},d,f_{k})$ with Euclidean angle measure is a
protractor geometry.
\end{proposition}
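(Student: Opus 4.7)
The plan is to define the angle measure $m_k$ by the tangent-ray construction suggested in the paragraph preceding the proposition, and then verify in turn the angle-construction, angle-addition, and supplement axioms by transferring each of them to the already-known Euclidean angle measure on tangent rays. The Pasch axiom has already been established in Proposition 3.2, so the task is exactly to produce such an $m_k$ and check the three angle axioms.

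First I would show the construction is well-defined. At any point $A=(x_0,y_0)\in\mathcal{P}$, every $h$-ray $\vec{AB}$ lies either on some $L_p$ (in which case its Euclidean tangent ray at $A$ is the vertical Euclidean ray through $A$ pointing toward $B$) or on some $L_{kac}$. In the latter case, implicit differentiation of $(x-c)^2+k^2y^2=a^2$ at $(x_0,y_0)$ with $y_0>0$ gives the well-defined slope $-(x_0-c)/(k^2y_0)$, so the tangent direction is unambiguous, and we pick the unit tangent pointing into the half of the ellipse containing $B$. Define $m_k(\angle BAC)$ to be the ordinary Euclidean angle between these two tangent rays at $A$.

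Next I would establish the key bijective correspondence between $h$-rays emanating from $A$ and Euclidean rays emanating from $A$ into the upper half-plane. A ray of slope $\mu$ at $A=(x_0,y_0)$ with $y_0>0$ is the tangent of exactly one $h$-line through $A$: the vertical direction corresponds to $L_{x_0}$, and any finite slope $\mu$ gives, via $c=x_0+\mu k^2 y_0$ and $a^2=k^2y_0^2(\mu^2k^2+1)$, a unique pair $(a,c)$ and hence a unique $L_{kac}$ through $A$ with that tangent. The chosen side of this tangent line at $A$ then selects a unique $h$-ray. With this correspondence, the angle-construction axiom for $m_k$ reduces to the Euclidean angle-construction axiom on tangent rays; one must check only that the two notions of \emph{side of a line through $A$} agree, which follows because the half-planes of $l$ described in the proof of Proposition 3.2 are separated at $A$ by exactly the tangent line to $l$ at $A$, so $h$-rays into $H_i$ correspond precisely to Euclidean tangent rays into the Euclidean half-plane containing $H_i$ locally at $A$.

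For the angle-addition axiom I would argue that if $D$ lies in the $h$-interior of $\angle BAC$, then the tangent ray $\vec{AD}'$ at $A$ lies between the tangent rays $\vec{AB}'$ and $\vec{AC}'$ in the Euclidean sense; this is again the local side-correspondence used above, applied to the two $h$-lines $AB$ and $AC$ simultaneously. Euclidean angle addition for the three tangent rays then yields $m_k(\angle BAD)+m_k(\angle DAC)=m_k(\angle BAC)$. The supplement axiom is immediate, because opposite $h$-rays from $A$ have opposite tangent rays at $A$ and Euclidean supplementary angles sum to $180^\circ$. The one genuine obstacle is the local side-correspondence step: it requires showing that at the point $A$ the $h$-line $l$ meets each of the two sets $H_1,H_2$ of Proposition 3.2 on the expected side of its tangent, and that a small arc of any second $h$-line through $A$ entering $H_i$ does so on the corresponding Euclidean side of the tangent line. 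This is a routine local analysis of the defining quadratic $(x-c)^2+k^2y^2-a^2$ and its gradient at $A$, but it is the only place where the ellipse structure enters in a nontrivial way, so I would treat it carefully before invoking the Euclidean axioms.
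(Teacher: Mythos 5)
Your proposal develops exactly the tangent-ray construction that the paper itself proposes in the paragraph preceding the proposition (``replace the elliptical rays \ldots by Euclidean rays that are tangents''), and the paper then states the result ``without going into details,'' offering no further proof. Your computation of the unique ellipse $L_{kac}$ with prescribed tangent slope at a point, and your identification of the side-of-tangent-line correspondence as the one nontrivial verification, correctly supply the details the paper omits; this is the same approach, carried out rather than merely asserted.
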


A\textit{\ neutral} (or absolute) \textit{geometry} is a protractor geometry
which satisfies Side-Angle-Side axiom (SAS). It is not difficult to deduce
the result that \textit{every }$\mathbb{H}_{k}$ \textit{is a neutral geometry%
} if its angle measure is defined a similar way to that of the original
Poincar\'{e} plane.

\textit{Open Questions}

In this paper it has been shown that generalized upper half-plane with the
Poincar\'{e} distance function gives a metric geometry $\mathbb{H}_{k}$ . Is
it possible to find a distance function distinct from that of the Poincar%
\'{e} distance for $\mathbb{H}_{k}$, using the hyperbolic distance of the
elliptical arcs?

For this, one has to use the elliptic integral $\int \sqrt{k^{2}+\cot t}dt$%
, $k\neq 1$.

also, there are some problems that are worth studying. Think, what they
are!\bigskip 

\textbf{References}

\begin{enumerate}
\item Anderson, J.W., Hyperbolic Geometry Springer-Verlag, London Berlin
Heidelberg (1964).

\item Birkhoff, G., A Set of Postulates for Plane Geometry, Based on Scale
and Protractor, Annals of Math., 33 (1932), 329-345.

\item \c{C}olako\u{g}lu, H.B. - Kaya, R., A generalization of some
well-known distances and related isometries, Math. Commun. 16 (2011), 21-35.

\item Greenberg, M.J., Euclidean and Non-Euclidean Geometries, Development
and History, W.H. Freeman and Company (1993).

\item Kaya, R. - Geli\c{s}gen, \"{O}., - Ekmek\c{c}i, S., - Bayar, A., On
The group of Isometries of the Plane with Generalized Absolute Value Metric,
Rocky Mountain J. of Math. 39, (2009), 591-603.

\item Krause, F.G., Taxicab Geometry, An Adventure in Non-Euclidean
Geometry, Dover Publications, Inc., New York (1986).

\item Millman, R.S., - Parker, G. D., Geometry, A Metric Approach with
Models, Springer-Verlag, New York Berlin Heilberg, (1991).

\item Stahl, S., The Poincar\'{e} Half-Plane, A Gateway to Modern Geometry,
Jones and Bartlett Publihers, Boston London Inc. (1993).
\end{enumerate}

\end{document}